\numberwithin{figure}{section}
\theoremstyle{plain}
\newtheorem{thm}{Theorem}[section]
\newtheorem{lem}[thm]{Lemma}
\newtheorem{cor}{Corollary}[thm]
\theoremstyle{definition}
\theoremstyle{remark}
\title[On Ricci solitons whose potential is convex]{On Ricci solitons whose potential is convex}
\author[C. K. Mondal, A. A. Shaikh]{Chandan Kumar Mondal$^1$, Absos Ali Shaikh$^2$}
\address{\noindent\newline $^{12}$Department of Mathematics,\newline University of Burdwan, Golapbag,\newline Burdwan-713104,\newline West Bengal, India}
\email{$^1$chan.alge@gmail.com}
\email{$^2$aask2003@yahoo.co.in, aashaikh@math.buruniv.ac.in}
\begin{document}
\begin{abstract}
In this paper we consider the Ricci curvature of a Ricci soliton. In particular, we have showed that a complete gradient Ricci soliton with non-negative Ricci curvature possessing a non-constant convex potential function having finite weighted Dirichlet integral satisfying an integral condition is Ricci flat and also it isometrically splits a line. We have also proved that a gradient Ricci soliton with non-constant concave potential function and bounded Ricci curvature is non-shrinking and hence the scalar curvature has at most one critical point.
\end{abstract}
\noindent\footnotetext{$\mathbf{2010}$\hspace{5pt}Mathematics\; Subject\; Classification: 53C20; 53C21; 53C44.\\ 
{Key words and phrases: Ricci soliton; scalar curvature; Ricci flat; convex function; critical point; Riemannian manifold. } }
\maketitle
\section{Introduction and preliminaries}
In 1982, Hamilton \cite{HA82} introduced the concept of Ricci flow. The Ricci
flow is defined by an evolution equation for metrics on the Riemannian manifold $(M,g_0)$:
$$\frac{\partial}{\partial_t}g(t)=-2Ric,\quad g(0)=g_0.$$
 A complete Riemannian manifold $(M,g)$ of dimension $n\geq 2$ with Riemannian metric $g$ is called a Ricci soliton if there exists a vector field $X$ satisfying
\begin{equation}\label{r7}
Ric+\frac{1}{2}\pounds_Xg=\lambda g,
\end{equation}
where $\lambda$ is a constant and $\pounds$ denotes the Lie derivative. The vector field $X$ is called potential vector field. The Ricci solitons are self-similar solutions to the Ricci flow. Ricci solitons are natural generalization of Einstein metrics, which have been significantly studied in differential geometry and geometric analysis. A Ricci soliton is an Einstein metric if the vector field $X$ is zero or Killing. Throughout the paper by $M$ we mean an $n$-dimensional, $n\geq 2$, complete Riemannian manifold endowed with Riemannian metric $g$. Let $C^\infty(M)$ be the ring of smooth functions on $M$. If $X$ is the gradient of some function $u\in C^\infty(M)$, such a manifold is called a \textit{gradient Ricci soliton}, and then (\ref{r7}) reduces to the form
\begin{equation}\label{r1}
\nabla^2u+Ric=\lambda g,
\end{equation}
where $\nabla^2u$ is the Hessian of $u$ and the function $u$ is called potential function. The Ricci soliton $(M,g,X,\lambda)$ is called shrinking, steady and expanding according as $\lambda>0$, $\lambda=0$ and $\lambda<0$, respectively. Each type of Ricci solitons determines some unique topology of the manifold. For example, if the scalar curvature of a complete gradient shrinking Ricci soliton is bounded, then the manifold has finite topological type \cite{FMZ08}. Munteanu and Wang proved that an $n$-dimensional gradient shrinking Ricci soliton with non-negative sectional curvature and positive Ricci curvature must be compact \cite{MW2017}, for more results see \cite{MW17,MW017}. Perelman \cite{PE03} proved that a compact Ricci soliton is always gradient Ricci soliton. For the detailed treatment on Ricci solitons and their interaction to Ricci flow, we refer to \cite{CZ06, CK04}. A smooth function $\varphi:M\rightarrow\mathbb{R}$ is said to be convex \cite{CK19,YA74} if for any $p\in M$ and for any vector $v\in T_pM$
$$\left\langle grad \varphi,v\right\rangle_p\leq \varphi(exp_pv)-\varphi(p).$$ 
If $\varphi$ is convex, then $-\varphi$ is called concave.	
\par The paper is arranged as follows: In the first section, we have proved that a complete non-compact gradient Ricci soliton with non-negative Ricci curvature possessing a non-constant convex function with finite weighted Dirichlet integral satisfying an integral condition is Ricci flat and also it isometrically splits a line. We have also deduced a corollary relating to the Ricci soliton and harmonic function. In the last section, we have proved that if in a complete gradient Ricci soliton, the potential function is a non-constant concave function with bounded Ricci curvature then the scalar curvature possesses at most one critical point, see Theorem \ref{th3}.  
\section{Ricci soliton and Ricci flat manifold}

\begin{lem}\label{th1}
Let $(M,g)$ be a complete Riemannian manifold with non-negative Ricci curvature. If $u\in C^\infty(M)$ is a non-constant convex function with finite weighted Dirichlet integral, i.e., 
\begin{equation}\label{r6}
\int_{M-B(p,r)}d(x,p)^{-2}|\nabla u|^2<\infty,
\end{equation}
 and also satisfies the relation
 \begin{equation}\label{eq1}
 \int_{M-B(p,r)}d(x,p)^{-2}u<\infty,
 \end{equation}
where $B(p,r)$ is an open ball with center $p$ and radius $r$, then the hessian of $u$ vanishes in $M$.
\end{lem}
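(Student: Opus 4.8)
The plan is to prove that $u$ is harmonic and then read off the conclusion from linear algebra. Convexity means that $\nabla^2u$ is positive semidefinite at every point, so its eigenvalues $\lambda_1,\dots,\lambda_n$ are nonnegative; hence $\Delta u=\mathrm{tr}_g\nabla^2u=\sum_i\lambda_i\ge 0$, i.e. $u$ is subharmonic, and moreover $|\nabla^2u|^2=\sum_i\lambda_i^2\le\big(\sum_i\lambda_i\big)^2=(\Delta u)^2$. In particular, if we can show $\Delta u\equiv 0$ then $\nabla^2u$ is a positive semidefinite tensor with vanishing trace, hence $\nabla^2u\equiv 0$, which is the assertion. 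So the whole problem reduces to showing that the nonnegative function $\Delta u$ vanishes identically.

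To prove $\Delta u\equiv 0$ I would use a cut-off integration by parts calibrated to the weight $d(\cdot,p)^{-2}$ occurring in \eqref{r6} and \eqref{eq1}. Fix $p$, and for large $R$ let $\phi_R$ be the logarithmic cut-off that equals $1$ on $B(p,R)$, equals $0$ off $B(p,R^2)$, and equals $\big(\log R^2-\log d(\cdot,p)\big)/\log R$ on the annulus $A_R:=B(p,R^2)\setminus B(p,R)$, so that $|\nabla\phi_R|=\big(d(\cdot,p)\log R\big)^{-1}$ a.e. on $A_R$. Since $\Delta u\ge 0$ and $0\le\phi_R\le 1$,
\[
0\;\le\;\int_{B(p,R)}\Delta u\;\le\;\int_M\phi_R^2\,\Delta u\;=\;-2\int_{A_R}\phi_R\,\langle\nabla\phi_R,\nabla u\rangle\;\le\;\frac{2}{\log R}\int_{A_R}\frac{|\nabla u|}{d(\cdot,p)} .
\]
The remaining step is to show that $(\log R)^{-1}\int_{A_R}|\nabla u|/d(\cdot,p)\to 0$ as $R\to\infty$, and this is where I would combine the two hypotheses: splitting $|\nabla u|/d$ by a weighted Young inequality throws the cost onto a tail $\int_{A_R}d(\cdot,p)^{-2}|\nabla u|^2$ of \eqref{r6}, with the lower-order remainder controlled by a tail of \eqref{eq1}; the comparison inequality $\Delta d\le(n-1)/d$, valid since $Ric\ge 0$, is what keeps the geometric factors under control (and it enters through $\Delta(\phi_R^2)$ if one prefers to integrate by parts a second time). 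Letting $R\to\infty$ then gives $\int_{B(p,R_0)}\Delta u=0$ for every $R_0$, hence $\Delta u\equiv 0$ and so $\nabla^2u\equiv 0$. A convenient variant is to test against $\phi_R^2u$ instead (after first checking, via the mean-value inequality for the subharmonic function $u$ and \eqref{eq1}, that $u$ is bounded), which produces the Caccioppoli estimate $\int\phi_R^2|\nabla u|^2\le 4\int u^2|\nabla\phi_R|^2=\tfrac{4}{(\log R)^2}\int_{A_R}d(\cdot,p)^{-2}u^2$, whose right-hand side is again killed by \eqref{eq1}.

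The main obstacle is precisely the estimate of this cut-off error term. Because $Ric\ge 0$ only forces at most Euclidean volume growth, a crude Cauchy–Schwarz on $\int_{A_R}|\nabla u|/d(\cdot,p)$ produces a factor $\mathrm{vol}(A_R)^{1/2}$ that a single power of $1/\log R$ cannot absorb; the role of having both weighted hypotheses at once — and of the precise logarithmic profile of $\phi_R$, for which $|\nabla\phi_R|^2\sim d(\cdot,p)^{-2}(\log R)^{-2}$ — is that the error can be reorganized so that what survives is genuinely the tail of a convergent integral and therefore vanishes. Pushing these remainders to $0$ (not merely bounding them), together with the usual care at the cut locus of $p$ where $d(\cdot,p)$ fails to be smooth, is the technical core of the argument.
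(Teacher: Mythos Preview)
Your pointwise reduction---a positive semidefinite form $\nabla^2u$ with $\mathrm{tr}_g\nabla^2u=0$ must vanish---is correct, and it is genuinely simpler than what the paper does at that stage. After proving $\Delta u=0$, the paper invokes the Bochner identity $\tfrac12\Delta|\nabla u|^2=|\nabla^2u|^2+Ric(\nabla u,\nabla u)$ and runs a second cut-off integration, this time calibrated to hypothesis~\eqref{r6}, to force $\int_M|\nabla^2u|^2=0$. Your eigenvalue argument bypasses Bochner entirely and shows that, once harmonicity is known, \eqref{r6} is not needed at all for the Hessian conclusion.

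The gap is in the harmonicity step. Your logarithmic cutoff has $|\nabla\phi_R|=(d\log R)^{-1}$, but Laplacian comparison $\Delta d\le(n-1)/d$ yields only a \emph{one-sided} bound $\Delta\phi_R\ge -C/(d^{2}\log R)$; since $\Delta d$ carries no lower bound under $Ric\ge 0$, there is no matching upper bound on $\Delta(\phi_R^2)$, and the double integration by parts $\int u\,\Delta(\phi_R^2)$ is not dominated by the tail in \eqref{eq1}. The single integration by parts fares no better: any Young-type splitting of $|\nabla u|/d$ leaves a bare volume term (the remainder is a constant, not $u/d^2$) that $1/\log R$ cannot absorb, exactly as you note. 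The Caccioppoli variant controls $\int\phi_R^2|\nabla u|^2$ rather than $\int\phi_R^2\Delta u$, and involves $u^2$ rather than $u$, so it does not close the loop either.

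What the paper uses instead is the Cheeger--Colding cutoff $\varphi_r\in C_0^2(B(p,2r))$, whose construction genuinely requires $Ric\ge 0$ and which satisfies, in addition to $|\nabla\varphi_r|^2\le C/r^2$, the \emph{two-sided} bound $|\Delta\varphi_r|\le C/r^2$. With this in hand one gets directly
\[
0\;\le\;\int_M\varphi_r^2\,\Delta u\;=\;\int_{B(p,2r)\setminus B(p,r)}u\,\Delta(\varphi_r^2)\;\le\;\frac{C}{r^2}\int_{B(p,2r)\setminus B(p,r)}u\;\longrightarrow\;0
\]
by \eqref{eq1}. The pointwise Laplacian control on the cutoff---unavailable for any function of $d(\cdot,p)$ alone---is the missing ingredient in your scheme.
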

\begin{proof}
Since $u\in C^\infty(M)$ is a non-constant convex function on $M$, it follows that \cite{YA74}, $M$ is non-compact. Now, we consider the cut-off function, introduced in \cite{CC96}, $\varphi_r\in C^2_0(B(p,2r))$ for $r>0$ such that
\[ \begin{cases} 
	  0\leq \varphi_r\leq 1 &\text{ in }B(p,2r)\\
      \varphi_r=1  & \text{ in }B(p,r) \\
      |\nabla \varphi_r|^2\leq\frac{C}{r^2}& \text{ in }B(p,2r) \\
      \Delta \varphi_r\leq \frac{C}{r^2} &  \text{ in }B(p,2r).
   \end{cases}
\]
Then for $r\rightarrow\infty$, we have $\Delta \varphi^2_r\rightarrow 0$ as $\Delta \varphi^2_r\leq \frac{C}{r^2}$.
Since $u$ is a smooth convex function, $u$ is also subharmonic \cite{GH71}, i.e., $\Delta u\geq 0$. Now using integration by parts, we have
\begin{equation}\label{r5}
\int_M u\Delta \varphi^2_r=\int_M \Delta u\varphi^2_r.
\end{equation}
 Since $\varphi_r\equiv 1$ in $B(p,r)$, using (\ref{r5}), we get
 \begin{equation*}
 \int_{B(p,r)}\Delta u=0.
 \end{equation*}
 Again, using the integration by parts and also by our assumption, we obtain
 \begin{equation*}
0\leq  \int_{B(p,2r)}\varphi_r^2\Delta u=\int_{B(p,2r)-B(p,r)}u\Delta \varphi_r^2\leq \int_{B(p,2r)-B(p,r)}u\frac{C}{r^2}\rightarrow 0,
 \end{equation*}
 as $r\rightarrow \infty$. Hence we have
 $$\int_M\Delta u=0.$$ But $\Delta u\geq 0$. Therefore, $\Delta u=0$ in $M$, i.e., $u$ is a harmonic function.
The Bochner formula \cite{AU13} for the Riemannian manifold is written as
$$\frac{1}{2}\Delta|\nabla u|^2=|\nabla^2 u|^2+g(\nabla u,\nabla\Delta u)+Ric(\nabla u,\nabla u).$$
Since $u$ is harmonic, so $\Delta u=0$. Therefore, the above equation reduces to
\begin{equation}\label{r1.1}
\frac{1}{2}\Delta|\nabla u|^2=|\nabla^2 u|^2+Ric(\nabla u,\nabla u).
\end{equation}
 Combining $\varphi_r^2 $ with (\ref{r1.1}) and then integrating we obtain
$$\int_M\Big\{|\nabla^2 u|^2+Ric(\nabla u,\nabla u)\Big\}\varphi^2_r=\int_M\frac{1}{2}\Delta|\nabla u|^2\varphi^2_r.$$
Using integration by parts we get
$$\int_M\frac{1}{2}\Delta|\nabla u|^2\varphi^2_r=\int_M \frac{1}{2}|\nabla u|^2\Delta \varphi^2_r.$$
Then the above equation and the property of $\varphi_r$ together imply
$$\int_{B(p,2r)-B(p,r)}\frac{1}{2}\Delta|\nabla u|^2\varphi^2_r\leq \int_{B(p,2r)-B(p,r)}\frac{C}{2r^2}|\nabla u|^2\rightarrow 0$$
as $r\rightarrow \infty$. And also in $B(p,r)$ we have
$$\int_{B(p,r)}\Big\{|\nabla^2 u|^2+Ric(\nabla u,\nabla u)\Big\}=\int_{B(p,r)}\frac{1}{2}|\nabla u|^2\Delta \varphi^2_r=0,$$
since $\varphi^2_r \equiv 1 $ in $B(p,r)$. Therefore
\begin{eqnarray}
\nonumber\int_M\Big\{|\nabla^2 u|^2+Ric(\nabla u,\nabla u)\Big\}&=& \lim_{r\rightarrow\infty}\Big(\int_{B(p,2r)-B(p,r)}+\int_{B(p,r)}\Big)\Big\{|\nabla^2 u|^2+Ric(\nabla u,\nabla u)\Big\}\\
&=& 0,
\end{eqnarray} 
which implies that $\nabla^2 u=0$.
\end{proof}
\begin{lem}\cite[Lemma 2.3]{SA96}\label{lm1}
Let $u$ be a smooth function in a complete Riemannian manifold $(M,g)$. Then the following conditions are equivalent:\\
$(i)$ $u$ is an affine function,\\
$(ii)$ Hessian of $u$ vanishes everywhere in $M$,\\
$(iii)$ $\nabla u$ is a Killing vector field with $|\nabla u|$ is constant. 
\end{lem}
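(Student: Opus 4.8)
The plan is to establish the three conditions are equivalent by proving the two "adjacent" equivalences $(i)\Leftrightarrow(ii)$ and $(ii)\Leftrightarrow(iii)$ directly, since each reduces to a one-line tensor computation once the correct identity is in place. Throughout, completeness of $(M,g)$ is what makes clause $(i)$ meaningful: every geodesic $\gamma\colon\mathbb{R}\to M$ is defined for all time, and $u$ being \emph{affine} means precisely that $t\mapsto u(\gamma(t))$ is an affine function of $t$ for every such $\gamma$.

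For $(i)\Leftrightarrow(ii)$, the key identity is
$$\frac{d^2}{dt^2}u(\gamma(t))=\nabla^2u(\dot\gamma(t),\dot\gamma(t)),$$
obtained from $\frac{d}{dt}u(\gamma(t))=g(\nabla u,\dot\gamma)$ together with $\nabla_{\dot\gamma}\dot\gamma=0$ and the symmetry of $\nabla^2u$. If $\nabla^2u\equiv 0$, this second derivative vanishes along every geodesic, so $u\circ\gamma$ is affine, giving $(ii)\Rightarrow(i)$. Conversely, if $u$ is affine, then for each $p\in M$ and each $v\in T_pM$ we pick the geodesic with $\gamma(0)=p$, $\dot\gamma(0)=v$ and read off $\nabla^2u(v,v)=0$; polarization then yields $\nabla^2u\equiv 0$, which is $(i)\Rightarrow(ii)$.

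For $(ii)\Leftrightarrow(iii)$, I would use $(\pounds_{\nabla u}g)(X,Y)=g(\nabla_X\nabla u,Y)+g(\nabla_Y\nabla u,X)=2\,\nabla^2u(X,Y)$, so $\nabla^2u=0$ is equivalent to $\nabla u$ being Killing; and $X|\nabla u|^2=2\,\nabla^2u(X,\nabla u)$, so $\nabla^2u=0$ forces $|\nabla u|$ constant, giving $(ii)\Rightarrow(iii)$. For $(iii)\Rightarrow(ii)$ the norm condition is in fact not needed: if $\nabla u$ is Killing then $\nabla^2u$ is simultaneously symmetric, being a Hessian, and skew-symmetric, by the Killing equation, hence vanishes. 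There is no genuine obstacle here; the only delicate point is clause $(i)$, where one must be careful that "affine" is taken in the global sense along complete geodesics rather than merely locally, and it is worth remarking explicitly that "$|\nabla u|$ constant" in $(iii)$ is an automatic consequence of $\nabla u$ being a Killing gradient field, recorded only to keep the statement self-contained.
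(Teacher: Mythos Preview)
Your argument is correct and entirely standard: each implication reduces to the tensor identities you wrote down, and your observation that the constancy of $|\nabla u|$ in $(iii)$ is redundant (since a gradient Killing field already forces $\nabla^2u=0$) is accurate. There is nothing to compare against, however, because the paper does not prove this lemma at all---it is quoted verbatim from Sakai \cite[Lemma~2.3]{SA96} and used as a black box. Your write-up would serve perfectly well as the missing justification.
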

\begin{thm}\cite[Theorem 1]{IN82}\label{th5}
If a complete Riemannian manifold $(M,g)$ admits a non-constant smooth affine function, then $M$ is isometric to $N\times\mathbb{R}$ for a totally geodesic submanifold $N$ of $M$.
\end{thm}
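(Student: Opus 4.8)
The plan is to extract from the affine function a parallel unit vector field and then use its flow to build an explicit isometry $N\times\mathbb{R}\to M$. First I would invoke Lemma \ref{lm1}: since $u$ is a non-constant affine function, $\nabla^2u=0$, hence $\nabla u$ is Killing with $|\nabla u|$ equal to a constant $a$, and the non-constancy of $u$ forces $a>0$, so after replacing $u$ by $u/a$ I may assume $|\nabla u|\equiv 1$. Set $V:=\nabla u$. Because $\nabla^2u=0$, $V$ is parallel; because $M$ is complete and $V$ has constant norm, $V$ is a complete vector field, so its flow $\{\phi_t\}_{t\in\mathbb{R}}$ is globally defined, and since $V$ is Killing each $\phi_t$ is an isometry of $M$. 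A one-line computation gives $\tfrac{d}{dt}u(\phi_t(x))=|\nabla u|^2=1$, so $u(\phi_t(x))=u(x)+t$; in particular $\phi_t$ carries the level set $u^{-1}(c)$ onto $u^{-1}(c+t)$.

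Next I would fix a value $c$ attained by $u$ and set $N:=u^{-1}(c)$. Since $\nabla u$ vanishes nowhere, $c$ is a regular value, $N$ is a smooth embedded hypersurface with unit normal $V$, and its second fundamental form is $\mathrm{II}(X,Y)=g(\nabla_XV,Y)=\nabla^2u(X,Y)=0$ for $X,Y$ tangent to $N$; thus $N$ is totally geodesic (and connected, once the product structure below is established, since $M$ is). I would then define $F:N\times\mathbb{R}\to M$ by $F(x,t)=\phi_t(x)$ and check it is a bijection. Injectivity: from $\phi_t(x)=\phi_s(y)$ with $x,y\in N$ one gets $c+t=u(\phi_t(x))=u(\phi_s(y))=c+s$, hence $t=s$, and then $x=y$ because $\phi_t$ is a diffeomorphism. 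Surjectivity: for $p\in M$ put $t=u(p)-c$; then $\phi_{-t}(p)\in u^{-1}(c)=N$ and $F(\phi_{-t}(p),t)=p$.

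Finally I would verify $F^{*}g=g_N+dt^2$. At a point $(x,t)$, under the splitting $T_{(x,t)}(N\times\mathbb{R})=T_xN\oplus T_t\mathbb{R}$ the differential $dF$ sends $\partial_t$ to $V_{\phi_t(x)}$, which has unit length, and restricted to $T_xN$ it equals $d\phi_t|_x$, which is a linear isometry onto $T_{\phi_t(x)}\phi_t(N)$ because $\phi_t$ is an ambient isometry taking the level hypersurface through $x$ to the one through $\phi_t(x)$. Since $V$ is the common unit normal to all level hypersurfaces of $u$, one has $T_{\phi_t(x)}\phi_t(N)=V_{\phi_t(x)}^{\perp}$, so the image decomposition $\mathbb{R}V\oplus V^{\perp}$ is orthogonal and $dF$ is a linear isometry at every point. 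Being also a bijection, $F$ is an isometry, so $M$ is isometric to the Riemannian product $N\times\mathbb{R}$ with $N$ totally geodesic, as claimed.

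I expect the delicate points to be exactly the two places where the hypotheses are genuinely used. Completeness of $M$ is what guarantees that the flow $\phi_t$ exists for all $t\in\mathbb{R}$ (a constant-norm vector field on a complete manifold has complete integral curves); without it $F$ would only be defined on part of $N\times\mathbb{R}$. And the full strength of $\nabla^2u=0$, rather than merely "$V$ Killing of constant norm'', is what forces the $\mathbb{R}$-fibers of $F$ to be unit-speed geodesics orthogonal to the $N$-slices and the slices to be isometric copies of $N$; if $V$ were only Killing the fibers need not be geodesic and $F^{*}g$ would not split as a product, so the parallelism of $V$ is essential. Everything else is routine bookkeeping with the flow.
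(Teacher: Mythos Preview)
The paper does not supply its own proof of Theorem~\ref{th5}; it is quoted verbatim as Theorem~1 of Innami \cite{IN82} and used as a black box in the proof of Theorem~\ref{th4}. So there is nothing in the paper to compare your argument against.

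That said, your argument is correct and is essentially the classical proof of this splitting result. The steps---normalizing to $|\nabla u|=1$, observing that $\nabla^2u=0$ makes $V=\nabla u$ parallel, using completeness to obtain a globally defined isometric flow $\phi_t$, taking $N=u^{-1}(c)$ as a totally geodesic level hypersurface, and checking that $F(x,t)=\phi_t(x)$ is a bijective Riemannian isometry onto $M$---are all standard and carried out cleanly. One small remark on your closing paragraph: by Lemma~\ref{lm1} the conditions ``$\nabla^2u=0$'' and ``$\nabla u$ Killing with $|\nabla u|$ constant'' are equivalent for gradient fields, so the contrast you draw between them is not a genuine dichotomy here; what you really need, and have, is that $V$ is \emph{parallel}, and that is exactly what the lemma delivers.
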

\begin{thm}\label{th4}
Let $M$ be a complete Riemannian manifold with non-negative Ricci curvature. If $M$ admits a non-constant convex function satisfying (\ref{r6}) and (\ref{eq1}), then $M$ is isometric to the Riemannian product $N\times\mathbb{R}$, where $N$ is a totally geodesic submanifold of $M$.
\end{thm}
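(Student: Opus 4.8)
The plan is simply to chain together the three preceding results. First I would invoke Lemma \ref{th1}: by hypothesis $M$ has non-negative Ricci curvature and $u$ is a non-constant convex function satisfying the weighted Dirichlet bound (\ref{r6}) together with the integral condition (\ref{eq1}), so the conclusion of that lemma gives that the Hessian $\nabla^2 u$ vanishes identically on $M$.

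Next I would feed this into Lemma \ref{lm1} applied to the same function $u$. The vanishing of $\nabla^2 u$ everywhere is exactly condition $(ii)$ there, so $u$ is an affine function (condition $(i)$), and in addition $\nabla u$ is a parallel, hence Killing, vector field with $|\nabla u|$ constant (condition $(iii)$). Since $u$ was assumed non-constant, $M$ now carries a non-constant smooth affine function.

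Finally, Theorem \ref{th5} applies verbatim: a complete Riemannian manifold admitting a non-constant smooth affine function is isometric to $N\times\mathbb{R}$ for a totally geodesic submanifold $N$ of $M$. This produces the asserted splitting and finishes the proof.

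In terms of difficulty, there is essentially no obstacle at the level of this theorem: all of the analytic content — the cut-off argument showing $u$ is harmonic and the Bochner-formula computation forcing $\nabla^2 u = 0$ — is already packaged inside Lemma \ref{th1}, which we are entitled to use. The only points needing a word of care are checking that the hypotheses of Lemma \ref{th1} are met exactly as stated and confirming that non-constancy of $u$ is retained so that Theorem \ref{th5} is applicable; one may also note that $|\nabla u|$ is then a nonzero constant, since otherwise $u$ would be constant, which identifies the $\mathbb{R}$-factor with the flow lines of $\nabla u/|\nabla u|$, though this observation is not required for the statement.
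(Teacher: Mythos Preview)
Your proposal is correct and follows exactly the paper's own argument: apply Lemma \ref{th1} to obtain $\nabla^2 u = 0$, then Lemma \ref{lm1} to conclude $u$ is affine, and finally Theorem \ref{th5} to get the splitting $M\cong N\times\mathbb{R}$. The additional remarks about $|\nabla u|$ being a nonzero constant are fine but, as you note, not needed for the statement.
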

\begin{proof}
In view of Lemma \ref{th1}, it follows that hessian of $u$ vanishes. Again, Lemma \ref{lm1} implies that $u$ is an affine function. Therefore, using Theorem \ref{th5}, we conclude that $M$ is isometric to the Riemannian product $N\times\mathbb{R}$, where $N$ is a totally geodesic submanifold of $M$.
\end{proof}
\begin{thm}\label{th6}
Let $(M,g,u)$ be a complete gradient Ricci soliton with non-negative Ricci curvature. If $u$ is a non-constant convex function on $M$ satisfying (\ref{r6}) and (\ref{eq1}), then $M$ is Ricci flat. Moreover, $\nabla u$ is a Killing vector field with $|\nabla u|$ is constant.
\end{thm}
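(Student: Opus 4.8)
The plan is to feed the hypotheses into Lemma \ref{th1} to kill the Hessian of $u$, then read off the Einstein condition from the gradient soliton equation (\ref{r1}), and finally use the \emph{parallelism} of $\nabla u$ (not just that it is Killing) to force the Einstein constant $\lambda$ to vanish.

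First I would observe that $M$ has non-negative Ricci curvature and carries a non-constant convex function $u$ satisfying (\ref{r6}) and (\ref{eq1}), so Lemma \ref{th1} applies and gives $\nabla^2 u \equiv 0$ on $M$. Substituting this into the gradient Ricci soliton equation (\ref{r1}), $\nabla^2 u + Ric = \lambda g$, immediately yields $Ric = \lambda g$; that is, $(M,g)$ is Einstein. Moreover, by Lemma \ref{lm1}, the vanishing of $\nabla^2 u$ means that $u$ is an affine function and that $\nabla u$ is a Killing vector field with $|\nabla u|$ constant; since $u$ is non-constant, this constant is positive, say $|\nabla u| = c > 0$. This already establishes the ``moreover'' part of the statement.

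It remains to show $\lambda = 0$. Here I would use that $\nabla^2 u = 0$ says precisely $\nabla(\nabla u) = 0$, so $E := \nabla u / c$ is a globally defined unit \emph{parallel} vector field. A parallel vector field has vanishing curvature in its own direction: for every vector $v$, $R(v,E)E = \nabla_v\nabla_E E - \nabla_E\nabla_v E - \nabla_{[v,E]}E = 0$, hence tracing gives $Ric(E,E) = \sum_i g(R(e_i,E)E, e_i) = 0$. (Equivalently, by Theorem \ref{th4}, $M$ splits isometrically as $N \times \mathbb{R}$ with $E$ tangent to the line factor, and the Ricci tensor of such a product annihilates the flat direction.) On the other hand, the Einstein condition gives $Ric(E,E) = \lambda\, g(E,E) = \lambda$. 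Comparing, $\lambda = 0$, so $Ric = \lambda g = 0$ and $M$ is Ricci flat.

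The only delicate step is this last one: deducing $\lambda = 0$. Everything before it is a direct substitution into the soliton equation together with Lemmas \ref{th1} and \ref{lm1}; the crux is noticing that $\nabla u$ is not merely Killing but actually parallel, equivalently that $M$ carries a line-splitting, which is exactly what makes the Ricci curvature vanish in the $\nabla u$ direction and thereby collapses the Einstein constant to zero.
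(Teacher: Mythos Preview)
Your argument is correct and follows the same overall line as the paper: apply Lemma~\ref{th1} to kill $\nabla^2 u$, substitute into (\ref{r1}) to get $Ric = \lambda g$, and then show $\lambda = 0$ by exhibiting $Ric(\nabla u,\nabla u)=0$ with $\nabla u\neq 0$. The only difference lies in how this last vanishing is justified. The paper extracts $Ric(\nabla u,\nabla u)=0$ directly from the Bochner computation inside the proof of Lemma~\ref{th1}: the identity $\int_M\big\{|\nabla^2 u|^2 + Ric(\nabla u,\nabla u)\big\}=0$ together with $Ric\ge 0$ forces both non-negative summands to vanish pointwise. You instead deduce $Ric(\nabla u,\nabla u)=0$ from the fact that $\nabla^2 u=0$ makes $\nabla u$ \emph{parallel}, so $R(\,\cdot\,,\nabla u)\nabla u=0$ and hence $Ric(\nabla u,\nabla u)=0$ (or, equivalently, via the line-splitting of Theorem~\ref{th4}). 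Your route has the virtue of using only the \emph{statement} of Lemma~\ref{th1} and works without invoking the sign of $Ric$ at that stage; the paper's route is marginally shorter since the needed vanishing is already present in the Bochner integral.
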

\begin{proof}
 From Lemma \ref{th1}, we get $\nabla^2 u=0$ and $Ric(\nabla u,\nabla u)=0$, since Ricci curvature is non-negative. Now from (\ref{r1}), we get
 $$Ric(\omega,\omega)=\lambda g(\omega,\omega),\text{ for }\omega\in TM,$$
 which implies that
$$Ric(\nabla u,\nabla u)=\lambda g(\nabla u,\nabla u)=0.$$
Therefore, we conclude that $\lambda=0$. And hence, (\ref{r1}) implies that Ricci curvature of $M$ vanishes in $M$, i.e., $M$ is a Ricci flat manifold. Also, Lemma \ref{th1} and Lemma \ref{lm1} together imply that $\nabla u$ is Killing vector field with $|\nabla u|$ is constant.
\end{proof}

\begin{cor}\label{co1}
Let $(M,g,u)$ be a complete non-compact gradient Ricci soliton  satisfying (\ref{r1}) with non-negative Ricci curvature. If $u\in C^\infty(M)$ is a harmonic function with finite weighted Dirichlet integral, i.e., 
$$\int_{M-B(p,r)}d(x,p)^{-2}|\nabla u|^2<\infty,$$
then $M$ is a Ricci flat manifold.
\end{cor}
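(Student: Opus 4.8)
The plan is to reuse the Bochner-formula part of the proof of Lemma \ref{th1}, observing that that argument used convexity of $u$ only to guarantee that $M$ is non-compact and that $u$ is harmonic --- both of which are now hypotheses --- while the extra condition (\ref{eq1}) was needed there only to deduce $\Delta u=0$, which we now assume outright.

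First, since $u$ is harmonic the Bochner formula reduces to (\ref{r1.1}), i.e. $\frac{1}{2}\Delta|\nabla u|^2=|\nabla^2 u|^2+Ric(\nabla u,\nabla u)$. Multiplying by the same cut-off $\varphi_r^2$, $\varphi_r\in C^2_0(B(p,2r))$, as in the proof of Lemma \ref{th1}, integrating over $M$, and integrating by parts gives
$$\int_M\big(|\nabla^2 u|^2+Ric(\nabla u,\nabla u)\big)\varphi_r^2=\int_M\frac{1}{2}|\nabla u|^2\,\Delta\varphi_r^2.$$
On $B(p,r)$ we have $\varphi_r\equiv 1$, so this region contributes nothing; on the annulus $B(p,2r)\setminus B(p,r)$ we have $|\Delta\varphi_r^2|\le C/r^2\le 4C\,d(x,p)^{-2}$, so the right-hand side is bounded by $2C\int_{B(p,2r)\setminus B(p,r)}d(x,p)^{-2}|\nabla u|^2$, which tends to $0$ as $r\to\infty$ by (\ref{r6}). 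Since $Ric\ge 0$, the integrand $|\nabla^2 u|^2+Ric(\nabla u,\nabla u)$ is non-negative, so letting $r\to\infty$ yields $\int_M\big(|\nabla^2 u|^2+Ric(\nabla u,\nabla u)\big)=0$, whence $\nabla^2 u\equiv 0$ and $Ric(\nabla u,\nabla u)\equiv 0$ on $M$.

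It then remains to run the argument of Theorem \ref{th6}: plugging $\nabla^2 u=0$ into (\ref{r1}) gives $Ric=\lambda g$, so $0=Ric(\nabla u,\nabla u)=\lambda|\nabla u|^2$; as $u$ is non-constant, $|\nabla u|$ is positive somewhere, forcing $\lambda=0$ and hence $Ric\equiv 0$, i.e. $M$ is Ricci flat. The only point requiring care is the cut-off estimate, exactly as in Lemma \ref{th1}: the tail of $\int d(x,p)^{-2}|\nabla u|^2$ must vanish, which is precisely hypothesis (\ref{r6}) combined with the comparison $r^{-2}\le 4\,d(x,p)^{-2}$ valid on $B(p,2r)\setminus B(p,r)$. (One should also keep in mind that $u$ non-constant is used here: for constant $u$ the soliton equation gives only $Ric=\lambda g$, which for a non-compact manifold may hold with $\lambda<0$, e.g. on hyperbolic space.)
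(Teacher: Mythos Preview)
Your proof is correct and follows exactly the approach the paper indicates: skip the harmonicity deduction (now a hypothesis, so condition (\ref{eq1}) is no longer needed) and run the Bochner-formula/cut-off argument of Lemma \ref{th1} followed by the soliton-equation step of Theorem \ref{th6}. One small quibble with your closing parenthetical: hyperbolic space does not have non-negative Ricci curvature, so it is not a counterexample under the stated hypotheses; in fact if $u$ were constant the soliton equation would give $Ric=\lambda g$ with $\lambda\ge 0$, and non-compactness together with Myers' theorem would still force $\lambda=0$.
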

\begin{proof}
The proof is same as that of Theorem \ref{th6} except the part where we have proved the harmonicity of the function $u$ and hence we omit.
\end{proof}
\section{Ricci soliton and critical points}
\begin{thm}\label{th2}
Let $(M,g)$ be a complete gradient Ricci soliton satisfying (\ref{r1}). If $u\in C^\infty(M)$ is a non-constant concave function and $(M,g)$ has bounded Ricci curvature, i.e., $|Ric|\leq K$ for some constant $K> 0$, then the Ricci soliton is non-shrinking. 
\end{thm}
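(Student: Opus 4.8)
The plan is to turn the concavity of $u$ into a lower Ricci bound via the soliton equation, and then play that bound against the non-compactness of $M$ using the Bonnet--Myers theorem. First I would record that, since $u$ is a non-constant concave function, $-u$ is a non-constant convex function, so by \cite{YA74} the manifold $M$ is non-compact. I would also note that, for a smooth function, the defining first-order inequality for convexity is equivalent to the Hessian being positive semi-definite; applying this to $-u$ shows that the Hessian of $u$ is negative semi-definite, i.e. $\nabla^2 u(v,v)\le 0$ for every $v\in TM$.

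Next I would substitute this into the gradient soliton equation (\ref{r1}). Writing (\ref{r1}) in the form $Ric=\lambda g-\nabla^2 u$ and using $\nabla^2 u\le 0$, I get
$$Ric(v,v)=\lambda\,g(v,v)-\nabla^2 u(v,v)\ge \lambda\,g(v,v)\qquad\text{for all }v\in TM,$$
that is, $Ric\ge\lambda g$ throughout $M$. Tracing this against the bounded-curvature hypothesis $|Ric|\le K$ gives $n\lambda\le R\le nK$; I expect, however, that the one-sided bound $Ric\ge\lambda g$ is all this particular conclusion needs, the role of $|Ric|\le K$ being in the companion Theorem \ref{th3} on critical points.

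Finally I would argue by contradiction: suppose the soliton is shrinking, i.e. $\lambda>0$. Then $Ric\ge (n-1)\big(\tfrac{\lambda}{n-1}\big)g$ with $\tfrac{\lambda}{n-1}>0$, so Bonnet--Myers forces $M$ to be compact (of diameter at most $\pi\sqrt{(n-1)/\lambda}$), contradicting the non-compactness established above. Hence $\lambda\le 0$, i.e. the soliton is non-shrinking. The only step with any content is the first one — that geodesic concavity of the smooth potential really yields $\nabla^2 u\le 0$ and hence $Ric\ge\lambda g$; after that the proof is just Bonnet--Myers together with Yau's non-existence of non-constant convex functions on compact manifolds, so I do not anticipate a genuine obstacle. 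The one thing to watch is the sign bookkeeping: a concave $u$ pushes the Ricci curvature \emph{up}, not down, and ``non-shrinking'' must be read as $\lambda\le 0$, so the statement is obtained precisely by excluding $\lambda>0$.
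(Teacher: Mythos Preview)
Your argument is correct, and it is genuinely different from the paper's. The paper does not invoke Bonnet--Myers; instead it integrates the soliton equation along an arbitrarily long minimizing geodesic $\gamma:[0,t_0]\to M$, uses the second variation of arc length with a piecewise-linear cut-off $\varphi$ to bound $\int_0^{t_0}Ric(\gamma',\gamma')$ from above, and absorbs the endpoint error terms using the uniform bound $|Ric|\le K$. Letting $t_0\to\infty$ and using $\nabla^2u\le 0$ (from concavity) then forces $\lambda\le 0$.

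Your route is shorter and more robust: once you observe $\nabla^2u\le 0$, the soliton equation immediately gives $Ric\ge\lambda g$, and then $\lambda>0$ contradicts non-compactness via Bonnet--Myers. In particular you are right that the hypothesis $|Ric|\le K$ plays no role in your argument; it is genuinely used in the paper's proof to control the $(1-\varphi^2)Ric$ contributions near the moving endpoint $\gamma(t_0)$, but your approach shows the theorem holds without it. One small remark on your write-up: the step ``the first-order inequality for convexity implies $\nabla^2(-u)\ge 0$'' deserves a one-line justification (expand $u\circ\exp_p(tv)$ to second order in $t$), since the paper only states the first-order definition and does not record this equivalence explicitly.
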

\begin{proof}
Since $u$ is a non-trivial concave function in $M$, the function $-u$ is non-constant convex and it implies that the manifold $M$ is non-compact. Let us consider a length minimizing normal geodesic $\gamma:[0,t_0]\rightarrow M$ for some arbitrary large $t_0>0$. Take $p=\gamma(0)$ and $X(t)=\gamma'(t)$ for $t>0$. Then $X$ is the unit tangent vector along $\gamma$. Now integrating (\ref{r1}) along $\gamma$, we get
\begin{eqnarray}
\int_{0}^{t_0}Ric(X,X)&=& \int_{0}^{t_0}\lambda g(X,X)-\int_{0}^{t_0}\nabla^2 u(X,X)\nonumber \\
&=& \lambda t_0-\int_{0}^{t_0}\nabla^2 u(X,X).
\end{eqnarray}
Again, by the second variation of arc length, we have
\begin{equation}
\int_{0}^{t_0}\varphi^2Ric(X,X)\leq (n-1)\int_{0}^{t_0}|\varphi'(t)|^2dt,
\end{equation}
for every non-negative function $\varphi$ defined on $[0,t_0]$ with $\varphi(0)=\varphi(t_0)=0$. We now choose the function $\varphi$ as the following:
\[\varphi(t)= \begin{cases} 
      t & t\in[0,1] \\
      1 & t\in[1,t_0-1] \\
      t_0-t & t\in[t_0-1,t_0].
   \end{cases}
\]
Then
\begin{eqnarray}\label{r2}
\int_{0}^{t_0}Ric(X,X)dt&=& \int_{0}^{t_0}\varphi^2 Ric(X,X)dt+\int_{0}^{t_0}(1-\varphi^2)Ric(X,X)dt\nonumber \\
&\leq & (n-1)\int_{0}^{t_0}|\varphi'(t)|^2dt+\int_{0}^{t_0}(1-\varphi^2)Ric(X,X)dt\nonumber \\
&\leq & 2(n-1)+\sup_{B(p,1)}|Ric|+\sup_{B(\gamma(t_0),1)}|Ric|.
\end{eqnarray}
Combining the equations (\ref{r1}) and (\ref{r2}), we get
\begin{eqnarray}\label{r3}
\lambda t_0-\int_{0}^{t_0}\nabla^2 u(X,X)&\leq & 2(n-1)+\sup_{B(p,1)}|Ric|+\sup_{B(\gamma(t_0),1)}|Ric|\nonumber \\
&=& 2(n-1)+2K.
\end{eqnarray}
Therefore, taking limit $t_0\rightarrow \infty$ on both sides of (\ref{r3}), we can write
\begin{equation}\label{r4}
\lim\limits_{t_0\rightarrow\infty}\lambda t_0-\lim\limits_{t_0\rightarrow\infty}\int_{0}^{t_0}\nabla^2 u(X,X)\leq 2(n-1)+2K.
\end{equation}
Now $\lim\limits_{t_0\rightarrow\infty}\int_{0}^{t_0}\nabla^2 u(X,X)\leq 0$, since $u$ is a concave function. If $\lambda>0$, then $\lim\limits_{t_0\rightarrow\infty}\lambda t_0=+\infty$, which contradicts the inequality (\ref{r4}). Thus $\lambda\leq 0$, i.e., the Ricci soliton is non-shrinking.
\end{proof}
\begin{lem}\label{le1}\cite{GH10}
Let $(M,g)$ be a steady gradient Ricci soliton with positive Ricci curvature. Then there is atmost one critical point of $R$.
\end{lem}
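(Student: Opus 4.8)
The plan is to transfer the question about critical points of the scalar curvature $R$ into a question about critical points of the potential function $u$, and then to exploit that, on a steady soliton, positivity of $Ric$ makes $u$ strictly concave.

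First I would record the standard gradient Ricci soliton identity
\[
\nabla R = 2\,Ric(\nabla u,\cdot).
\]
This follows by taking the divergence of (\ref{r1}): the contracted second Bianchi identity gives $\mathrm{div}\,Ric=\tfrac12\nabla R$, a commutation (Ricci) identity gives $\nabla^j\nabla_i\nabla_j u=\nabla_i\Delta u+R_{ij}\nabla^j u$, and the trace of (\ref{r1}) gives $\Delta u=n\lambda-R$; combining these and cancelling yields the displayed formula (here $\lambda=0$, though the identity holds in general). Since $Ric$ is positive definite by hypothesis, $Ric(\nabla u,\cdot)$ vanishes at a point exactly when $\nabla u$ does, so $\nabla R(x)=0$ if and only if $\nabla u(x)=0$. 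Hence the critical set of $R$ coincides with the critical set of $u$, and it suffices to show that $u$ has at most one critical point.

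Next, since the soliton is steady, (\ref{r1}) reads $\nabla^2 u=-Ric$, so $\nabla^2 u$ is negative definite everywhere, i.e. $u$ is strictly concave. Suppose, for contradiction, that $u$ had two distinct critical points $p$ and $q$. By completeness (Hopf--Rinow) join them by a unit-speed minimizing geodesic $\gamma:[0,\ell]\to M$ and set $h(t)=u(\gamma(t))$. Then $h'(t)=g(\nabla u,\gamma'(t))$ and, since $\gamma$ is a geodesic, $h''(t)=\nabla^2 u(\gamma'(t),\gamma'(t))<0$, so $h'$ is strictly decreasing on $[0,\ell]$; but $h'(0)=h'(\ell)=0$ because $p$ and $q$ are critical points of $u$, which is absurd. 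Therefore $u$, and with it $R$, has at most one critical point.

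I do not anticipate a genuine obstacle: the argument is short once the soliton identity $\nabla R=2\,Ric(\nabla u)$ is available, and that identity together with Hopf--Rinow (to supply the connecting geodesic) is all the input required. The only points needing care are the sign bookkeeping in deriving the identity, and the simple but essential observation that the hypothesis "positive Ricci curvature" is precisely what promotes the steady soliton equation to strict concavity of $u$.
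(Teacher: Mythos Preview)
The paper does not prove this lemma at all; it simply quotes it from Guo \cite{GH10} and uses it as a black box in the proof of Theorem~\ref{th3}. So there is no paper-proof to compare against, and the relevant question is only whether your argument is sound.

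It is. Your derivation of the soliton identity $\nabla R = 2\,Ric(\nabla u,\cdot)$ from the divergence of (\ref{r1}), the contracted second Bianchi identity, and the commutation formula is the standard one, and the sign bookkeeping checks out. From there, positivity of $Ric$ gives the equivalence $\nabla R=0\Leftrightarrow\nabla u=0$, the steady equation $\nabla^2 u=-Ric$ gives strict concavity of $u$, and the one-variable argument along a minimizing geodesic rules out two distinct critical points. This is essentially the argument in Guo's paper as well, so you have correctly reconstructed the cited result.
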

\begin{thm}\label{th3}
Let $(M,g)$ be a complete non-compact gradient Ricci soliton satisfying 
$$\nabla^2u+Ric=\lambda g,$$
with $\lambda \geq 0$. If $u\in C^\infty(M)$ is a non-constant concave function and Ricci curvature of $M$ satisfies $0<Ric\leq K$ for some constant $K> 0$, then there is atmost one critical point of the scalar curvature $R$.
\end{thm}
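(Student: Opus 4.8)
The plan is a short two-step reduction: first use the concavity of $u$ together with Theorem \ref{th2} to pin down the sign of $\lambda$, and then feed the resulting steady soliton into Lemma \ref{le1}. So I would not try to analyze the critical points of $R$ directly at all; instead I would arrange the hypotheses so that an already-established structure result does the work.

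The first step is to verify that Theorem \ref{th2} applies. Its hypotheses are that $(M,g)$ is a complete gradient Ricci soliton satisfying (\ref{r1}), that $u$ is a non-constant concave function, and that $|Ric|\leq K$ for some $K>0$. All of these hold here: indeed $0<Ric\leq K$, read as an inequality between symmetric bilinear forms, says that every eigenvalue of the Ricci operator lies in $(0,K]$, so in particular $|Ric|\leq K$; completeness and (\ref{r1}) are assumed; and the non-compactness stated in the theorem is in fact automatic once $u$ is a non-constant concave function, since then $-u$ is a non-constant convex function on $M$. Applying Theorem \ref{th2} then gives that the soliton is non-shrinking, i.e. $\lambda\leq 0$. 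Combined with the standing assumption $\lambda\geq 0$, this forces $\lambda=0$, so $(M,g,u)$ is a \emph{steady} gradient Ricci soliton.

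The second step is immediate: with $\lambda=0$ and the positivity $Ric>0$ assumed in the statement, we are exactly in the hypotheses of Lemma \ref{le1} (a steady gradient Ricci soliton with positive Ricci curvature), which yields at most one critical point of the scalar curvature $R$. This finishes the argument.

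I do not expect a genuine obstacle in this proof; it is essentially a citation chain. The only points that require a moment's care are the passage from the two-sided bound $0<Ric\leq K$ to the norm bound $|Ric|\leq K$ needed to invoke Theorem \ref{th2}, and the observation that $\lambda\leq 0$ together with $\lambda\geq 0$ pins the soliton down as steady so that Lemma \ref{le1} becomes available.
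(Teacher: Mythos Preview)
Your proposal is correct and follows essentially the same approach as the paper: invoke Theorem \ref{th2} to force $\lambda=0$, then apply Lemma \ref{le1} to the resulting steady soliton with positive Ricci curvature. Your write-up is in fact more careful than the paper's in verifying the hypotheses (the passage from $0<Ric\leq K$ to $|Ric|\leq K$, and the automatic non-compactness).
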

\begin{proof}
Since $\lambda\geq 0$, by using Theorem \ref{th2} we can prove that $\lambda=0$. Therefore, $M$ is a steady Ricci soliton. Now using Lemma \ref{le1}, the result easily follows.
\end{proof}
\section*{Acknowledgment}
The authors would like to thank the unknown referee very much for carefully reading and thoughtful criticism of the manuscript which helps us to improve the manuscript.


\begin{thebibliography}{11}


            \bibitem{AU13}
 Aubin, T., \textit{Some nonlinear problems in Riemannian geometry}, Springer, 2013.
 
                             
         \bibitem{CZ06}
        Cao, H. D. and Zhu, X. P.,
         \textit{A complete proof of the Poincare and geometrization
         conjectures-application of the Hamilton-Perelman theory of the Ricci flow}, 
          Asian J. Math., \textbf{10} (2006), 165--492.
     
         \bibitem{CC96}
        Cheeger, J. and Colding, T. H.,
         \textit{Lower bounds on Ricci curvature and the almost rigidity of warped products}, 
         Ann. Math., \textbf{144(1)} (1996), 189--237.
          
              \bibitem{CK04}
             Chow, B. and Knopf, D.,
              \textit{The Ricci flow: an introduction, mathematical surveys and monographs}, 
               Amer. Math. Soc., \textbf{110}, 2004.
             \bibitem{FMZ08}
            Fang, F. Q., Man, J. W. and Zhang, Z. L.,
             \textit{Complete gradient shrinking Ricci solitons have finite topological type}. 
              C. R. Acad. Sci. Paris, Ser. I \textbf{346}(1971), 653--656.
     
                            
        \bibitem{GH71}
       Greene, R. E. and Wu. H.,
        \textit{On the subharmonicity and plurisubharmonicity of a geodesic convex function},
         Indiana Univ. Math. J. \textbf{22}(1971), 641--653.

             \bibitem{GH10}
            Guo, H.,
             \textit{On the Ricci curvature of steady gradient Ricci solitons}, 
              J. Math. Anal. Appl., \textbf{363} (2010), 497--501.
             
    \bibitem{HA82}         
    Hamilton, R. S., \textit{Three-manifolds with positive Ricci curvature}, J. Diff. Geom., \textbf{17} (1982), 255--306.
    
        \bibitem{IN82}         
        Innami, N., \textit{Splitting theorems of Riemannian manifolds}, Compositio Math., \textbf{47(3)} (1982), 237--247.
   
         \bibitem{MW17}         
         Munteanu, O. and Wang, J., \textit{Geometry of shrinking Ricci solitons}, Compositio Math., \textbf{151(12)} (2017), 2273--2300.
         
           \bibitem{MW2017}         
           Munteanu, O. and Wang, J., \textit{Positively curved shrinking Ricci solitons}, J. Diff. Geom., \textbf{106} (2017), 499--505.

         
           \bibitem{MW017}         
           Munteanu, O. and Wang, J., \textit{Conical structure for shrinking Ricci solitons}, J. Eur. Math. Soc., \textbf{19} (2017), 3377--3390.
                    
              \bibitem{PE03}
            Perelman, G.,
              \textit{The entropy formula for the Ricci flow and its geometric applications}, 
              arXiv.org/abs/math.DG/02111159, (2003).
 
              
       \bibitem{SA96}
       Sakai, T., \textit{On Riemannian manifolds admitting a function whose gradient is of constant norm}, Koadi. Math. J., \textbf{19} (1996), 39--51.
           
    \bibitem{CK19}
    Shaikh, A. A, Mondal, C. K. and Ahmad, I., \textit{Non-existence of certain type of convex functions on a
    Riemannian manifold with a pole}, J. Geom. Phys, \textbf{140} (2019), 104--110.
    

      \bibitem{YA74}
     Yau, S.T.,
      \textit{Non-existence of continuous convex functions on certain Riemannian manifolds}. 
       Math. Ann. \textbf{207} (1974), 269--270.

 
  
 

                                

\end{thebibliography}
\end{document}